\documentclass[12pt]{amsart}

\usepackage{amsmath}
\usepackage{amsfonts}
\usepackage{amssymb}
\usepackage{amsthm}
\usepackage{newlfont}
\usepackage{url}
\usepackage{mathtools}
\usepackage{thmtools}
\usepackage{fullpage}
\usepackage[onehalfspacing]{setspace}
\usepackage[margin=20pt,textfont=footnotesize,labelfont=bf, justification=centerfirst, labelsep=endash]{caption}
\usepackage[colorlinks=true, pdfstartview=FitV, linkcolor=blue, citecolor=blue, urlcolor=blue]{hyperref}

\declaretheorem[style=definition,within=section,qed=$\blacktriangle$]{definition}
\declaretheorem[style=definition,qed=$\blacktriangle$,sibling=definition]{example}
\declaretheorem[style=plain,sibling=definition]{theorem}

\declaretheorem[style=plain,sibling=definition]{proposition}
\declaretheorem[style=plain,sibling=definition]{lemma}

\declaretheorem[style=plain,numbered=no,name=Maker's Ergodic Theorem]{makerstheorem}



\newcommand{\strc}{\text{s.c}}
\newcommand{\textG}{\text{G}}
\newcommand{\textB}{\text{B}}
\newcommand{\textW}{\text{W}}
\newcommand{\textd}{\text{d}}
\newcommand{\cale}{\mathcal{E}}
\renewcommand{\d}{~\text{d}}
\newcommand{\bbn}{\mathbb{N}}
\newcommand{\calf}{\mathcal{F}}
\newcommand{\calb}{\mathcal{B}}
\newcommand{\calr}{\mathcal{R}}
\newcommand{\bfr}{\mathbf{R}}
\newcommand{\bbz}{\mathbb{Z}}
\newcommand{\cald}{\mathcal{D}}
\newcommand{\cali}{\mathcal{I}}
\newcommand{\T}{\mathcal{T}}
\newcommand{\ep}{\epsilon}

\begin{document}

\title{Learning the ergodic decomposition}
\author{Nabil I. Al-Najjar}\address{Kellogg School of Management, Northwestern University}\email{al-najjar@kellogg.northwestern.edu}
\author{Eran Shmaya}\address{School of Mathematics, Tel Aviv University and Kellogg School of Management, Northwestern University}\email{erans@post.tau.ac.il}
\thanks{We thank Ehud Kalai, Ehud Lehrer and Rann Smorodinsky for helpful discussions.}
\date{First draft: February 2013;  This version:  \today
}
\subjclass[2000]{Primary: 60G10, 91A26. Secondary: 37A25,62M20, 62F15}

\maketitle

\begin{abstract}
A Bayesian agent learns about the structure of a stationary process from observing past outcomes. We prove that his predictions about the near future become approximately those he would have made if he knew the long run empirical frequencies of the process. 
\end{abstract}

\section{Introduction}
Consider a stationary, finite-valued stochastic process with probability law $\mu$. According to the ergodic theorem, an observer of this process can reconstruct the `true' ergodic component of the process from observing a single typical infinite realization. Decision problems, on the other hand, are often concerned with making predictions based on finite past observations.  In such problems, the primary object of interest is the predictive distribution about the outcome of the process at a given day given the finite history of outcomes from previous days. 

This paper relates these two perspectives on predictions and decisions. We consider the long-run properties of an observer's predictive distribution over next period's outcome as observations accumulate. We show that the predictive distribution  becomes arbitrarily close to the predictive distribution conditioned on knowledge of the true ergodic component, in most periods almost surely. Thus, as data accumulates, an observer's predictive distributions based on finite history become the `correct' predictions, in the sense of becoming nearly as good as what he would have predicted given knowledge of the objective empirical frequencies of the process. We demonstrate that the various qualifications we impose cannot be dropped.

Our results connect several literatures on learning and predictions in stochastic environments. First, there is the literature on the strong merging of opinions, pioneered by Blackwell and Dubins~\cite{BlackwellDubins:62}.%
\footnote{   Kalai and Lehrer~\cite{KalaiLehrer:93} apply this concept  to learning in games.}
 More directly relevant to our purpose are the weaker notions of merging introduced by Kalai and Lehrer~\cite{KalaiLehrer:94} and  Lehrer and Smorodinsky~\cite{LehrerSmorodinsky:96}, which focus on closeness of near-horizon predictive distributions. While strong merging obtains only under stringent assumptions, weak merging can be more easily satisfied. In our setting, for example, the posteriors may fail to strongly merge with the true parameter, no matter how much data accumulates. This strong notion of merging is unnecessary in contexts where decision makers discount the future or  care only about a fixed number of future periods. Weak merging, to which our results apply, is usually sufficient.

Another line of enquiry focuses on representations of the form $\mu=\int \mu_\theta\d\lambda(\theta)$, where a probability measure $\mu$ (the law of the stochastic process) is expressed as a convex combination of distributions $\{\mu_\theta\}_{\theta\in\Theta}$ that may be viewed as especially ``simple,'' or ``elementary.'' Such representations, also called 
\emph{decompositions}, are useful in models of learning where the set of parameters $\Theta$ may be viewed as the main object of learning. Two seminal theorems are de Finetti's representation of exchangeable distributions and the ergodic decomposition theorem for stationary processes.  Exchangeability rules out many interesting patterns of inter-temporal correlation, so it is natural to consider the larger class of stationary distributions. For this class, the canonical decomposition is in terms of the ergodic distributions. This is the finest decomposition possible using  parameters that are themselves stationary.  Our main theorem states that a Bayesian decision maker's predictions, based on finite histories, become arbitrarily close to those he would have made given knowledge of the true ergodic component.

Our result should also be contrasted with Doob's consistency theorem which states that Bayesian posteriors weakly converge to the true parameter. When the focus is the quality of decisions, what matters is not the agents' belief about the true parameter but the quality of his predictions.  Although the two concepts are related, they are not the same.  The difference is seen in the following example from Jackson, Kalai and Smorodinsky~\cite[Example 5]{JacksonKalaiSmorodinsky:99}: Assume that the outcomes Heads and Tails are generated by tossing a fair coin. If we take the set of all dirac measures on infinite sequences of  Heads-Tails outcomes as ``parameters'', then the posterior about the parameter converges weakly to a belief that is concentrated on the true realization. On the other hand the agent's predictions about next period's outcome is constant and never approach the predictions given the true ``parameter.''  This example highlights that convergence of  posterior beliefs to the true parameters may have little relevance to an agent's predictions and behavior.

Every process can be represented in an infinite number of ways, many of which, like the decomposition of the coin toss process above, are not very sensible.  Jackson, Kalai and Smorodinsky~\cite{JacksonKalaiSmorodinsky:99} study the question of what makes a particular decomposition of a stochastic process sensible. One requirement is for the process to be learnable, in the sense that an agent's predictions about near-horizon events become close to what he would have predicted had he known the true parameter. Given the close connection between ergodic distributions and long-run frequencies,  the most natural decomposition $\mu=\int \mu_\theta\d\lambda(\theta)$ of a stationary process is where the $\theta$'s index the  ergodic distributions. We show that their results do not apply to the class of stationary processes and their canonical ergodic decompositions. We show, however, that the ergodic decomposition is learnable in a weaker, yet meaningful sense as described below.

A third related literature, which traces to Cover~\cite{Cover:75}, is non-Bayesian estimation of stationary processes. See Morvai and Weiss~\cite{MorvaiWeiss:05} and the reference therein. This literature looks for an algorithm that make near-horizon predictions that are accurate for every stationary process. Our proofs of Theorem~\ref{th:thetheorem} and Example~\ref{ex:war} rely on techniques that were developed in this literature. There is however a major difference between that literature and our work: We are interested in a specific algorithm, namely Bayesian updating. 
Our agent's predictions and behavior are derived from this updating process.  We show how to apply the mathematical apparatus developed for the non-Bayesian estimation in our Bayesian setup.



\section{Formal model}

\subsection{Preliminaries}
An agent (a decision maker, a player,  or a statistician) observes a stochastic process $\left(\zeta_0,\zeta_1,\zeta_2,\dots\right)$ that takes values in a finite set of \emph{outcomes} $A$. Time is indexed by $n$ and the agent starts observing the process at $n=0$.  Let $\Omega=A^\bbn$ be the space of realizations of the process, with generic element denoted $\omega=(a_0,a_1, \ldots)$.  Endow $\Omega$  with the product topology and  the induced Borel structure $\calf$.  Let $\Delta(\Omega)$ be the set of probability distributions over $\Omega$. The law of the process is an element $\mu$ of $\Delta(\Omega)$.
A standard way to represent uncertainty about the process is in terms of  an index set of ``parameters:''
\begin{definition}
Let $\mu\in\Delta(\Omega)$. A \emph{decomposition}
  of $\mu$ is a quadruple 
  $\left(\Theta,\calb,\lambda,\left(\mu_{\theta}\right)\right)$ where:
  $\left(\Theta,\calb,\lambda\right)$ is a standard probability space of \emph{parameters} and $\mu_\theta\in\Delta(\Omega)$ for every $\theta\in\Theta$ such that the map $\theta\mapsto \mu_\theta(A)$ is $\calb$-measurable and 
\begin{equation}\label{eq:decomposition}
		\mu(A)=\int_\Theta \mu_\theta(A) ~\lambda(\textd\theta)
\end{equation}
for every $A\in\calf$. 
\end{definition}

A decomposition captures a certain way in which a Bayesian agent arranges his beliefs: The agent views the process as a two stages randomization. First a parameter $\theta$ is chosen according to $\lambda$ and then the outcomes are generated according to $\mu_\theta$. 
Beliefs can be represented in many ways. The two extreme decompositions are: (1) the {\em Trivial Decomposition.} with $\Theta=\{\bar\theta\}$, $\calb$ is trivial, and $\mu_{\bar\theta}=\mu$;  and (2) the {\em Dirac Decomposition.} with $\Theta=A^\bbn$, $\calb=\calf$, and $\lambda=\mu$. A ``parameter''  in this case is just a Dirac measure  $\delta_\omega$ that assigns probability 1 to the realization $\omega$. 

We are interested in decompositions that identify ``useful'' patterns shared by many realizations. These patterns capture our intuition of fundamental properties of a process. The two extreme cases are usually unsatisfactory. In the Dirac decomposition, there are as many parameters as there are realizations; parameters simply copy realizations. In the trivial decomposition, there is a single parameter and thus cannot discriminate between different interesting patterns.

Stationary beliefs admit a well-known decomposition with natural properties. Recall that the set of stationary measures over $\Omega$ is convex and compact in the weak$^\ast$-topology. Its extreme points are called \emph{ergodic beliefs}. We denote the set of ergodic beliefs by $\cale$. Every stationary belief $\mu$ admits a unique decomposition in which the parameter set is the set of ergodic beliefs: $\mu=\int \nu~\lambda(\textd \nu)$ for some belief $\lambda\in\Delta(\cale)$. We call this decomposition \emph{the ergodic decomposition}.

According to the ergodic theorem, for every stationary belief $\mu$ and every  block $(\bar a_0,\dots,\bar a_{k-1})\in A^k$, the limit frequency
\[\Pi(\omega;\bar a_0,\dots,\bar a_{k-1})=\lim_{n\rightarrow\infty}\frac{1}{n}\#\bigl\{0 \le t <n: a_{t}=\bar a_0,\dots,a_{t+k-1}=\bar a_{k-1}\bigr\}\]
exists for $\mu$-almost every realization $\omega=(a_0,a_1,\dots)$. When $\mu$ is ergodic this limit equals the probability $\mu([\bar a_0,\dots,\bar a_{k-1}])$. Thus, for ergodic processes, the probability of every block equals its (objective) empirical frequency.

The ergodic decomposition theorem states that for $\mu$-almost every $\omega$, The function $\Pi(\omega;\cdot)$ defined over blocks can be extended to a stationary measure over $\Delta(\Omega)$ which is also ergodic. Moreover, $\mu=\int \Pi(\omega;\cdot)~\mu(\textd \omega)$, so that the function $\omega\rightarrow\Pi(\omega;\cdot)$ recovers the ergodic parameter from the realization of the process. Thus, the parameters $\mu_\theta$ in the ergodic decomposition represent the empirical distribution of finite sequences of outcome along the realization of the stationary process. These parameters capture our intuition of fundamentals of the process.

A special case of the ergodic decomposition is the decomposition of an exchangeable distribution $\mu$ via i.i.d.\ distributions. For future reference, consider the following example:
\begin{example}\label{ex:dirichlet} 
The set of outcomes is $A=\{0,1\}$ and the agent's belief  is given by
	$$\mu\left(\zeta_n=a_0,\dots,\zeta_{n+k-1}
		=a_{k-1}\right)=\frac{1}{(k+1)\cdot {\binom{k}{d}}}$$
for every $n,k\in\bbn$ and $a_0,\dots,a_{k-1}\in A$ where $d=a_0+\dots+a_{k-1}$. Thus, the agent believes that if he observes the process $k$ consecutive periods then the number $d$ of good periods (periods with outcome $1$) is distributed uniformly in $[0,k]$ and all configuration with $d$ good outcomes are equally likely. 

De-Finetti's decomposition is given by $\left(\Theta,\calb,\lambda\right)$ where $\Theta=[0,1]$ equipped with the standard Borel structure $\calb$ and Lebesgue's measure $\lambda$, and, for $\theta\in\Theta$ $\mu_\theta\in\Delta(\Omega)$ is the distribution of i.i.d coin tosses with probability of success $\theta$:
	$$\mu_\theta\left(\zeta_n=a_0,\dots,\zeta_{n+k-1}
			=a_k\right)=\theta^d(1-\theta)^{k-d}$$
\end{example}

\subsection{Learning}
For every $\mu\in\Delta(\Omega)$ and sequence $\left(a_0,\dots,a_{n-1}\right)\in A^n$ with positive $\mu$-probability, the {\em $n$-period predictive distribution} is the element  $\mu(\cdot|a_0,\dots,a_{n-1}) \in \Delta(A)$ representing the agent's prediction about next period's outcomes given a prior $\mu$ and after observing the first $n$ outcomes of the process. Predictive distributions in this paper will always refer to one-step ahead predictions. This is for expository simplicity; our analysis covers any finite horizon.

Kalai and Lehrer \cite{KalaiLehrer:94}, and Kalai, Lehrer and Smorodinsky \cite{KalaiLehrerSmorodinsky:99} introduced the following notions merging. Note that in our setup, where the set of outcomes is the same in every period, this definition of merging is the same as `weak star merging' in~D'Aristotile, Diaconis and Freedman~\cite{DAristotileDiaconisFreedman:88}.
\begin{definition} Let
  $\mu,\tilde\mu\in\Delta(\Omega)$. Then 
the belief $\tilde\mu$ \emph{merges} to $\mu$ if 
	$$\left | \tilde\mu(\cdot|a_0,\dots,a_{n-1})-\mu(\cdot|a_0,\dots,a_{n-1})\right |
			\xrightarrow[n\rightarrow\infty]{\ }0$$
for  $\mu$-almost every realization $\omega=(a_0,a_1,\dots)\in A^\bbn$.

The belief $\tilde\mu$ \emph{weakly merges} to $\mu$ if
\begin{equation}\label{eq:awm}
	\left | \tilde\mu(\cdot|a_0,\dots,a_{n-1})-\mu(\cdot|a_0,\dots,a_{n-1})\right |\xrightarrow[n\rightarrow\infty]{\strc}0.%
	\footnote{ A bounded sequence of real numbers $a_0,a_1,\dots$ is said to \emph{strongly Cesaro converges} to a real number $a$, denoted $a_n\xrightarrow[n\rightarrow\infty]{\strc}a$, if $\lim_{n\rightarrow\infty}\frac{1}{n}\sum_{k=0}^{n-1}\left |a_k-a\right |=0$. Equivalently, $a_n\xrightarrow{\strc}a$ if there exists a full density set $T\subseteq\bbn$ such that $\lim_{n\rightarrow\infty, n\in T}a_n=a$.}
\end{equation}
for  $\mu$-almost every realization $\omega=(a_0,a_1,\dots)\in A^\bbn$.
\end{definition}
Here and later, for every pair $p,q\in\Delta(A)$ we let $\|p-q\|=\max_{a\in A}|p[a]-q[a]|$. These definitions were inspired by Blackwell and Dubins idea of strong merging, which requires that the prediction of $\tilde\mu$ will be similar to the prediction of $\mu$ not just for the next period but for the infinite horizon. 

\begin{definition}\label{de:learnable-rep}
A decomposition $\left(\Theta,\calb,\lambda,\left(\mu_{\theta}\right)\right)$ of $\mu\in\Delta(\Omega)$ is \emph{learnable} if $\mu$ merges with $\mu_\theta$ for $\lambda$-almost every $\theta$. The decomposition is {\em weakly learnable} if $\mu$ weakly merges with $\mu_\theta$ for $\lambda$-almost every $\theta$.
\end{definition}
As an example of a learnable decomposition, consider the Bayesian agent of Example~\ref{ex:dirichlet}. In this case 
	$$\mu(1|a_0,\dots,a_{n-1})=\frac{a_0+\dots+a_{n-1}+1}{n+1}.$$ 
For every $\theta\in [0,1]$ the strong law of large numbers implies that for every parameter $\theta\in[0,1]$ this expression converges  $\mu_\theta$-almost surely to $\theta$. Therefore $\mu$ merges with $\mu_\theta$ for every $\theta$, so De Finetti's decomposition is learnable (and, a fortiori, weakly learnable).  
This is a rare case in which the predictions $\mu(\zeta_n\in\cdot|a_0,\dots,a_{n-1})$ and $\mu_\theta(\zeta_n\in\cdot|a_0,\dots,a_{n-1})$ can be calculated explicitly. In general merging and weak merging are difficult to establish, because the Bayesian prediction about the next period is a complicated expression which is potentially depends on entire observed past.

\subsection{Motivation for Weak Merging}

In applications, $\mu$ represent the true process generating observations, and $\tilde\mu$ is a Bayesian agent's belief. To say that  $\tilde\mu$ weakly merges with $\mu$ means that  his next period predictions are accurate except for rare times. 

To connect this concept with statistical decision problems, suppose  that in every period, before the outcome is realized, the agent has to take some \emph{decision} from a finite set $D$. The agent's \emph{payoff} is represented by the  \emph{payoff function} $r:A\times D\rightarrow [0,1]$. 
A \emph{strategy} is given by $f:\bigcup_{n\ge 0}A^n\mapsto D$, with  $f(a_0,\dots,a_{n-1})$ denoting the action taken given the past realized outcomes. Let
	$$V_N(f)=\frac{1}{N}\int \sum_{n=0}^{N-1} r 
			\left 	(a_n,f(a_0,\dots,a_{n-1})\right)~\textd \mu$$
be the expected average expected payoff in the first $N$ periods. Fix $\ep>0$. A strategy $f^\ast$ is \emph{$\ep$-optimal for $N$ periods under $\mu$} if $V_N(f)\le V_N(f^\ast)+\ep$ for every strategy $f$. Of course the optimal strategy depends on the agent's belief $\mu$. The following proposition, which is immediate from the definition of weak learning, says that an agent who maximizes according to a belief that weakly merges with the truth will play $\ep$-optimal strategies against the truth if he is sufficiently patient. By `sufficiently patient' we mean that the horizon $N$ is large. Similar result applies if the agent aggregates periods' payoffs using some discount factor where by `sufficiently patient' is meant that the discount factor is close to $1$.
\begin{proposition}Let $\tilde\mu,\mu\in \Delta(\Omega)$ be such that $\tilde\mu$ weakly merges with $\mu$. For every $\ep>0$ there exists $N_0$ such that for every $N>N_0$, in every decision problem, every $0$-optimal strategy for $N$ periods under $\tilde\mu$ is $\ep$-optimal for $N$ periods under $\mu$.\end{proposition}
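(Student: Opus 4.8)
The plan is to carry out the whole comparison under the true law $\mu$, exploiting the fact that actions do not affect the evolution of the process, so that the $N$‑period optimization decouples period by period. Fix a decision problem $(D,r)$ and abbreviate $h_n=(a_0,\dots,a_{n-1})$. By linearity of the integral and the tower property,
\[
V_N(f)=\frac1N\sum_{n=0}^{N-1}\int_\Omega\Bigl(\sum_{a\in A}\mu(a\mid h_n)\,r(a,f(h_n))\Bigr)\d\mu(\omega),
\]
and since the period‑$n$ term depends on $f$ only through its values on $A^n$, a strategy is $0$‑optimal for $N$ periods under $\mu$ exactly when, at every history of length less than $N$ reached with positive $\mu$‑probability, it selects an action attaining $\max_{d\in D}\sum_{a}\mu(a\mid h_n)r(a,d)$. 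Hence
\[
\sup_f V_N(f)=\frac1N\sum_{n=0}^{N-1}\int_\Omega\max_{d\in D}\sum_{a\in A}\mu(a\mid h_n)r(a,d)\d\mu(\omega)=:V_N^\ast .
\]
The same reasoning applied to $\tilde\mu$ shows that every $0$‑optimal strategy $f^\ast$ for $N$ periods under $\tilde\mu$ is greedy with respect to the predictive distributions $\tilde\mu(\cdot\mid h_n)$ at every history of positive $\tilde\mu$‑probability; and since the weak‑merging hypothesis is only meaningful when, for $\mu$‑almost every $\omega$, the distributions $\tilde\mu(\cdot\mid h_n)$ are defined along $\omega$, every prefix of a $\mu$‑typical realization carries positive $\tilde\mu$‑probability, so this characterization of $f^\ast$ applies where it is needed.

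Next I would bound the per‑period loss incurred by following $f^\ast$ under $\mu$. For fixed $\omega$ and $n$, put $p=\mu(\cdot\mid h_n)$, $q=\tilde\mu(\cdot\mid h_n)$, let $d^\ast$ attain $\max_{d\in D}\langle p,r(\cdot,d)\rangle$, and recall $\tilde d:=f^\ast(h_n)$ attains $\max_{d\in D}\langle q,r(\cdot,d)\rangle$. Then
\[
\langle p,r(\cdot,d^\ast)\rangle-\langle p,r(\cdot,\tilde d)\rangle
=\langle p-q,r(\cdot,d^\ast)\rangle+\langle q,r(\cdot,d^\ast)-r(\cdot,\tilde d)\rangle+\langle q-p,r(\cdot,\tilde d)\rangle ,
\]
where the middle term is non‑positive by the choice of $\tilde d$, and, since $r$ takes values in $[0,1]$, each of the other two terms is at most $\sum_{a}|p(a)-q(a)|\le|A|\cdot\|p-q\|$. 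Thus the per‑period loss is at most $2|A|\,\|\mu(\cdot\mid h_n)-\tilde\mu(\cdot\mid h_n)\|$, a bound whose constant depends only on $A$ and not on $(D,r)$. Integrating and averaging over $n<N$ yields
\[
0\ \le\ V_N^\ast-V_N(f^\ast)\ \le\ \frac{2|A|}{N}\sum_{n=0}^{N-1}\int_\Omega\|\mu(\cdot\mid h_n)-\tilde\mu(\cdot\mid h_n)\|\d\mu(\omega).
\]

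Finally I would let $N\to\infty$. Each term $\|\mu(\cdot\mid h_n)-\tilde\mu(\cdot\mid h_n)\|$ lies in $[0,1]$, so the averages $\frac1N\sum_{n=0}^{N-1}\|\mu(\cdot\mid h_n)-\tilde\mu(\cdot\mid h_n)\|$ are uniformly bounded; by the assumption that $\tilde\mu$ weakly merges with $\mu$ they converge to $0$ for $\mu$‑almost every $\omega$, so the bounded convergence theorem gives
\[
\lim_{N\to\infty}\int_\Omega\frac1N\sum_{n=0}^{N-1}\|\mu(\cdot\mid h_n)-\tilde\mu(\cdot\mid h_n)\|\d\mu(\omega)=0 .
\]
Hence there is $N_0$, depending only on $\ep$ and $|A|$ and in particular not on the decision problem, such that $V_N^\ast-V_N(f^\ast)\le\ep$ for every $N>N_0$; together with $V_N(f)\le V_N^\ast$ for all strategies $f$, this gives $V_N(f)\le V_N(f^\ast)+\ep$, i.e.\ $f^\ast$ is $\ep$‑optimal for $N$ periods under $\mu$.

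I expect the only real content to lie in the two places where the hypothesis enters: converting the almost‑sure convergence of the block averages provided by weak merging into a bound on their \emph{expectation}, which is exactly what uniform boundedness plus the bounded convergence theorem deliver; and observing that the per‑period error bound, hence $N_0$, can be chosen independently of $(D,r)$, which is what makes the conclusion hold simultaneously in every decision problem. Everything else is the routine bookkeeping of decoupling a finite‑horizon problem whose actions do not influence the state, together with the elementary fact that a decision maker's loss from using the ``wrong'' predictive distribution at a single stage is controlled by the distance between the two predictive distributions.
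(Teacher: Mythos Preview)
Your proof is correct. The paper does not actually give a proof of this proposition---it simply remarks that it ``is immediate from the definition of weak learning''---and your argument (decoupling the $N$-period value into per-period terms, bounding the one-step loss of the $\tilde\mu$-greedy action under $\mu$ by $2|A|\,\|\mu(\cdot\mid h_n)-\tilde\mu(\cdot\mid h_n)\|$, and then passing the almost-sure strong-Ces\`aro limit through the expectation via bounded convergence) is precisely the routine computation that makes the remark immediate.
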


Kalai, Leher and Smorodinsky~\cite{KalaiLehrerSmorodinsky:99} provide a motivation for weak learning in terms of the properties of calibration tests. The idea of calibration originated with Dawid~\cite{Dawid:82}. A calibration test of a forecast compares the predicted frequency of events to their realized empirical frequencies. Kalai et al. showed that  $\tilde\mu$ weakly merges with $\mu$ if and only if forecasts made by $\tilde\mu$ pass all calibration tests of a certain type when the outcomes are generated according to $\mu$.

Finally, Lehrer and Smorodinsky~\cite{LehrerSmorodinsky:00} provide a characterization of weak merging in terms of the relative entropy between $\tilde\mu$ and $\mu$.\footnote{ However, we do not know whether their condition can be used to prove our theorem without repeating the whole argument.} No similar characterization is known for merging.

\subsection{Merging and the Consistency of Bayesian Estimators}

The idea of learning captured by Definition~\ref{de:learnable-rep} concerns the quality of  predictions made about near-horizon events. Another, perhaps more common, way to think about Bayesian inference is in terms of the consistency of Bayesian estimator. Consistency can be thought of as concerning learning the parameter itself. Recall that the Bayesian estimator of the parameter $\theta$ is the agent's conditional belief over $\theta$ after observing the outcomes of the process. It is well known that under any `reasonable' decomposition, the Bayesian estimator is consistent, i.e., the estimator weakly converges to the Dirac measure over the true parameter as data accumulates\footnote{ The argument traces back to Doob. See, for example, Weizsacker \cite{Weizsacker:96} and the references therein. It holds whenever the decomposition has the property that the realization of the process determines the parameter}. However, consistency of the estimator does not imply that the agent can use what he has learned to make predictions about future outcomes. For example, consider the Dirac decomposition of the process of fair coin tosses. Suppose the true parameter is $\omega^\ast$ for some $\omega^\ast=(\omega^\ast_0,\omega^\ast_1,\dots)$. After observing the first $n$ outcomes of the process the agent's belief about the parameter is uniform over all $\omega$ that agrees with $\omega^\ast$ on the first $n$ coordinates. While this belief indeed converges to $\delta_{\omega^\ast}$, the agent does not gain any new insight about the future of the process from learning the parameter. This decomposition is therefore not learnable.
\section{Main Theorem}
We are now in a position to state our main theorem.
\begin{theorem}\label{th:thetheorem}
The ergodic decomposition of every stationary stochastic process is weakly learnable.
\end{theorem}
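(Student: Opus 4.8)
\ The plan is to reduce the theorem to a single almost-sure statement about $\mu$ and then to prove that statement by combining the Shannon--McMillan--Breiman theorem with an $L^{2}$ martingale strong law. \emph{Reduction.} In the ergodic decomposition the parameter is an ergodic measure $\nu$ and $\mu_\nu=\nu$, so we must show that $\bigl\|\nu(\cdot|a_0,\dots,a_{n-1})-\mu(\cdot|a_0,\dots,a_{n-1})\bigr\|\xrightarrow{\strc}0$ for $\nu$-almost every $\omega$, for $\lambda$-almost every $\nu$. By the ergodic theorem the map $\omega\mapsto\nu_\omega:=\Pi(\omega;\cdot)$ recovers the ergodic component of $\omega$, $\nu_\omega=\nu$ holds $\nu$-a.s.\ for every ergodic $\nu$, and $\nu$ is a version of the conditional law of $\omega$ under $\mu$ given $\nu_\omega$. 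Hence it suffices to prove
\begin{equation}\label{eq:plan-reduction}
	\frac1N\sum_{n=0}^{N-1}\bigl\|\nu_\omega(\cdot|a_0,\dots,a_{n-1})-\mu(\cdot|a_0,\dots,a_{n-1})\bigr\|\ \xrightarrow[N\to\infty]{}\ 0\qquad\text{for }\mu\text{-a.e. }\omega,
\end{equation}
because then, since $\mu=\int\nu\,\lambda(\textd\nu)$, the event in \eqref{eq:plan-reduction} has $\nu$-probability $1$ for $\lambda$-a.e.\ $\nu$, and intersecting it with the set $\{\nu_\omega=\nu\}$ (of $\nu$-probability $1$) shows that $\mu$ weakly merges with $\mu_\nu=\nu$.

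\emph{Set-up.} Write $\calf_n=\sigma(\zeta_0,\dots,\zeta_{n-1})$, $\mathcal G_n=\calf_n\vee\sigma(\nu_\omega)$, $p_n=\nu_\omega(\zeta_n\in\cdot|\calf_n)$ and $q_n=\mu(\zeta_n\in\cdot|\calf_n)$. Since under $\mu$ the conditional law of $\omega$ given $\nu_\omega$ is $\nu_\omega$ itself, $p_n=\mu(\zeta_n\in\cdot|\mathcal G_n)$ and $q_n=\mathbb E_\mu[p_n|\calf_n]$; in particular, $\mu$-a.s.\ every realized cylinder has positive probability under $\mu$ and under $\nu_\omega$, the relative entropy $D(p_n\,\|\,q_n):=\sum_{a\in A}p_n[a]\log\bigl(p_n[a]/q_n[a]\bigr)$ is finite and nonnegative, and, writing $L_n:=\log\bigl(p_n[a_n]/q_n[a_n]\bigr)$, we have $\mathbb E_\mu[L_n|\mathcal G_n]=D(p_n\,\|\,q_n)$ and $\sum_{n=0}^{N-1}L_n=\log\bigl(\nu_\omega([a_0,\dots,a_{N-1}])\big/\mu([a_0,\dots,a_{N-1}])\bigr)$.

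\emph{Two limits.} First, by the Shannon--McMillan--Breiman theorem applied to the ergodic measure $\nu_\omega$ and, in its general stationary form, to $\mu$ (for which the limit is the entropy rate of $\omega$'s ergodic component), both $-\tfrac1N\log\nu_\omega([a_0,\dots,a_{N-1}])$ and $-\tfrac1N\log\mu([a_0,\dots,a_{N-1}])$ converge $\mu$-a.s.\ to $h(\nu_\omega)$; subtracting,
\begin{equation}\label{eq:plan-smb}
	\frac1N\sum_{n=0}^{N-1}L_n\ \xrightarrow[N\to\infty]{}\ 0\qquad\text{for }\mu\text{-a.e. }\omega .
\end{equation}
Second, $M_N:=\sum_{n=0}^{N-1}\bigl(L_n-D(p_n\,\|\,q_n)\bigr)$ is a martingale for the filtration $(\mathcal G_N)_N$. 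Since $A$ is finite and $x\mapsto x(\log x)^2$ is bounded on $(0,1]$, the quantities $\sum_{a}p_n[a](\log p_n[a])^2$ and $\sum_{a}q_n[a](\log q_n[a])^2$ are bounded by a constant depending only on $|A|$; as $\zeta_n$ has law $p_n$ given $\mathcal G_n$ and law $q_n$ given $\calf_n$, this gives $\sup_n\mathbb E_\mu[L_n^2]<\infty$, whence $\sum_{n\ge1}n^{-2}\,\mathbb E_\mu\bigl[(L_n-D(p_n\,\|\,q_n))^2\bigr]<\infty$. By Kronecker's lemma applied to the $L^2$-bounded (hence a.s.\ convergent) martingale $\sum_n(n+1)^{-1}\bigl(L_n-D(p_n\,\|\,q_n)\bigr)$, we get $M_N/N\to0$ $\mu$-a.s., and combining with \eqref{eq:plan-smb}, $\tfrac1N\sum_{n=0}^{N-1}D(p_n\,\|\,q_n)\to0$ $\mu$-a.s.

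\emph{Conclusion.} By Pinsker's inequality, $\|p_n-q_n\|\le\sum_{a}|p_n[a]-q_n[a]|\le\sqrt{2\,D(p_n\,\|\,q_n)}$, so by concavity of the square root
\[
	\frac1N\sum_{n=0}^{N-1}\|p_n-q_n\|\ \le\ \Bigl(\frac2N\sum_{n=0}^{N-1}D(p_n\,\|\,q_n)\Bigr)^{1/2}\ \xrightarrow[N\to\infty]{}\ 0\qquad\text{for }\mu\text{-a.e. }\omega,
\]
which is \eqref{eq:plan-reduction}, and the theorem follows. I expect the real obstacle to be the general (non-ergodic) Shannon--McMillan--Breiman input behind \eqref{eq:plan-smb}---that $-\tfrac1N\log\mu([a_0,\dots,a_{N-1}])$ converges to the entropy rate of $\omega$'s ergodic component---which is exactly where the techniques from ergodic theory and universal prediction enter; the remaining steps are bookkeeping plus the elementary finite-alphabet moment estimates.
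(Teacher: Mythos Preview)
Your argument is correct and takes a genuinely different route from the paper's proof.

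The paper works on the two-sided process and proves a single lemma: for any shift-invariant sub-$\sigma$-algebra $\cald$, the one-step prediction given $\calf_0^n\vee\cald$ is strongly Ces\`aro close to the prediction given $\calf_{-\infty}^n\vee\cald$. The proof is short: by stationarity the difference at time $n$ equals $g_n\circ T^n$ with $g_n\to 0$ a.s.\ (martingale convergence), and Maker's generalized ergodic theorem gives $\frac{1}{N}\sum g_n\circ T^n\to 0$. Applying the lemma once with $\cald$ trivial and once with $\cald=\cali$, and using the triangle inequality (both predictions are close to $\mu(\zeta_n\in\cdot\,|\,\calf_{-\infty}^n)$, since $\cali\subseteq\calf_{-\infty}^0$), finishes the proof. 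No entropy, no Pinsker, no martingale strong law.

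Your approach is information-theoretic: you control the Ces\`aro average of $D(p_n\|q_n)$ by splitting $L_n$ into its conditional mean $D(p_n\|q_n)$ and a martingale-difference remainder, handling $\sum L_n$ via SMB and the remainder via an $L^2$ strong law, then finishing with Pinsker. This is very much in the spirit of the Lehrer--Smorodinsky relative-entropy characterization of weak merging that the paper cites; the authors themselves remark in a footnote that they do not know whether that characterization can be used to prove the theorem ``without repeating the whole argument.'' Your proof essentially shows that it can, but the repetition is hidden inside the non-ergodic SMB theorem. The identification of the SMB limit for $\mu$ with $h(\nu_\omega)$---which you correctly flag as the real obstacle---rests on $\cali\subseteq\calf_{-\infty}^0$ (mod~$\mu$) and on precisely the Maker/Breiman machinery that the paper invokes directly, with the added cost that for the logarithmic integrands one also needs Breiman's $\sup_n g_n\in L^1$ lemma, which the paper avoids because its $g_n$ are uniformly bounded. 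In exchange, your route gives a quantitative Pinsker-type bound and a transparent link to entropy, while the paper's route yields a more general statement (weak learnability of the decomposition induced by any shift-invariant $\cald\subseteq\calf_{-\infty}^0$) with a shorter, self-contained proof.
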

To see the implications of our theorem, consider the following Hidden Markov process 
\begin{example}\label{ex:hidden}
An agent believes that the state of the economy every period is a noisy signal of an underlying ``hidden'' states that changes according to a Markov chain with memory 1. Formally, let $A=\{\textB,\textG\}$ be the set of outcomes, $H=\{\textB,\textG\}$ the set of hidden (unobserved) states, and $(\xi_n,\zeta_n)$ a $(H\times A)$-valued stationary Markov process with transition matrix $\rho:H\times A\rightarrow\Delta(H\times A)$ given by
	$$\rho(h,a)[h',a']=\bigl(p \delta_{h,h'}
			+(1-p)(1-\delta_{h,h'})\bigr)\cdot\bigl 
					(q \delta_{h',a'}+(1-q)(1-\delta_{h',a'})\bigr),$$
where $1/2<p,q<1$. Thus, if the hidden state in period $n$ was $h$ then at period $n+1$ the hidden state $h'$ remains $h$ with probability $p$ and changes with probability $1-p$. The observed state $a'$ of period $n+1$ will then be $h'$ with probability $q$ and is different from $h$ with probability $1-q$. Let $\mu_{p,q}\in \Delta(A^\bbn)$ be the distribution of $\zeta_0,\zeta_1,\dots$. Then $\mu_{p,q}$ is a stationary process which is not markov of any order. If the agent is uncertain about $p,q$ then his belief $\mu$ about the outcome process is again stationary, and can be represented by some prior over the parameter set $\Theta=(1/2,1]\times (1/2,1]$. This decomposition of $\mu$ will be the ergodic decomposition.
\end{example}
The consistency of the Bayesian estimator for $(p,q)$ implies that the conditional belief over the parameter $(p,q)$ converges almost surely in the weak-topology over $\Delta(\Theta)$ to the belief concentrated on the true parameter. However, because next-period's predictions involve complicated expressions that depend on the entire history of the process, it is not  clear whether these predictions merge with the truth. It follows from our theorem that they weakly merge.

Consider now the general case. If the agent knew the fundamental $\theta$, then at period $n$, after observing the partial history $(a_0,\dots,a_{n-1})$, his predictive probability that the next period outcome is $a_n$ would have been \begin{equation}\label{eq:nextperiod}\frac{\mu_\theta(a_0,\dots,a_{n-1},a_n)}{\mu_\theta(a_0,\dots,a_{n-1})}.\end{equation}Again consistency of the Bayesian estimator implies that, given uncertainty about the fundamental, the agent's assessment of $\mu_\theta(b)$ becomes asymptotically accurate for every block $b$. However, when the agent has to compute the next-period posterior probability~\eqref{eq:nextperiod}, he only had one observation of a block of size $n$ and no observation of the block of size $n+1$ so at that stage his assessment of the probabilities that appear in~\eqref{eq:nextperiod} may be completely wrong. Our theorem says that the agent would still weakly learn to make these predictions correctly.

Theorem~\ref{th:thetheorem} states that the agent will make predictions about near-horizon events as if he knew the fundamental of the process. Note, however, that it is not possible to ensure that the agent will learn to predict long-run events correctly, no matter how much data accumulates. For example, consider an agent who faces a sequence of i.i.d.\ coin tosses with parameter  $\theta\in[0,1]$ representing the probability of Heads. Suppose this agent has a uniform prior over [0,1]. This agent will eventually learn to predict near horizon outcomes as if he knew the true parameter $\theta$, but if he will continue to assign probability 0 to the event that the long-run frequency is $\theta$. In economic models, discounting implies that only near-horizon events matter.

We end this section with an example that in Theorem~\ref{th:thetheorem} weak learnability cannot be replaced by learnability. The example is a modification of an example given by Ryabko for the forward prediction problem in a non-Bayesian setup~\cite{Ryabko:88}. 
\begin{example}\label{ex:war}
Every period there is a probability $1/2$ for eruption of war. If no war erupts then the outcome is either bad economy or good economy and is a function of the number of peaceful periods since the last war. The function from the number of peaceful periods to outcome is an unknown parameter of the process, and the agent has a uniform prior over this parameter.

Formally, let $A=\{\textW,\textB,\textG\}$ be the set of outcomes. We define $\mu\in\Delta(A^\bbn)$ through its ergodic decompositions. Let $\Theta=\{\textB,\textG\}^{\{1,2,\dots\}}$ be the set of parameters with the standard Borel structure $\calb$ and the uniform distribution $\lambda$. Thus, a parameter is a function $\theta:\{1,2,\dots\}\rightarrow\{\textB,\textG\}$. We can think about this belief as a hidden markov model where the unobservable process $\xi_0,\xi_1,\dots$ is the time that elapsed since last time a war occurred. Thus, 
$\xi_0,\xi_1,\dots$ is the $\bbn$-valued stationary Markov process with transition probability
	$$\rho(j|k)=\begin{cases}&1/2,\text{ if }j=k+1,\\
			&1/2,\text{ if }j=0,\\
			&0,\text{ otherwise}.\end{cases}$$
for every $j,k\in\bbn$, and $\mu_\theta$ is the distribution of a sequence $\zeta_0,\zeta_1,\dots$ of $A$-valued random variables such that
	$$\zeta_n=\begin{cases}\textW, 
			&\text{ if }\xi_n=0\\\theta(\xi_n),
			&\text{ otherwise}.\end{cases}$$
Consider a Bayesian agent who observes the process. After the first time a war erupts the agent keeps track of the state of the process $\xi_n$ at every period. If there is no uncertainty about the parameter, i.e., if the Bayesian agent knew $\theta$, his prediction about the next outcome when $\zeta_n=k$ gives probability $1/2$ to outcome $\textW$ and probability $1/2$ to outcome $\theta(k+1)$. On the other hand, if the agent does not know $\theta$ but believes that it is randomized according to $\lambda$, he can deduce the values $\theta(k)$ gradually while he observes the process. However for every $k\in\{1,2,3,\dots\}$ there will be a time when the agent will observe $k$ consecutive peaceful period for the first time and at this point the agent's prediction about the next outcome will be $(1/2,1/4,1/4)$. Thus there will always be infinitely many occasions in which an agent who predicts according to $\mu$ will differ than an agent who predicts according to $\mu_\theta$. Therefore the decomposition is not learnable. On the other hand, in agreement with our theorem, these occasions become more infrequent as time goes by so the decomposition is weakly learnable. \end{example}

\section{Proof of Theorem~\ref{th:thetheorem}}\label{se:proof-thm}
Up to now we assumed that the stochastic process starts at time $n=0$. When working stationary processes it is natural to extend the index set of the process from $\bbn$ to $\bbz$, i.e. to assume that the process has infinite past. This is without loss of generality: every stationary stochastic process $\zeta_0,\zeta_1,\dots$ admits an extension $\dots,\zeta_{-1},\zeta_0,\zeta_1,\dots$ to the index set $\bbz$ \cite[Lemma 10.2]{Kallenberg:02}. We therefore assume hereafter, with harmless contrast with our previous notation, that $\Omega=A^\bbz$.  
 
Let $\cald$ be a $\sigma$-algebra Borel subsets of $\Omega$. The \emph{quotient space} of  $(\Omega,\calf,\mu)$ with respect to $\cald$ is the unique (up to isomorphism of measure spaces) standard probability space $(\Theta,\calb,\lambda)$ and a measurable map $\alpha:\Omega\rightarrow\Theta$  such that $\cald$ is generated by $\alpha$, i.e., for every $\calf$-measurable function $f$ from $\Omega$ to some standard probability space there exists a (unique up to equality $\lambda$-almost surely) $\calb$-measurable \emph{lifting} $\tilde f$ defined over $\Theta$ such that $ f=\tilde f\circ\alpha\quad\mu-\text{a.s.}$. The \emph{conditional distributions} of $\mu$ over $\cald$ is the unique (up to equality $\lambda$-almost surely) family $\mu_\theta$ of probability measures over $(\Omega,\calf,\mu)$ such that:
\begin{enumerate}
\item For every $\theta\in\Theta$ it holds that \begin{equation}\label{eq:conditional}\mu_\theta\left(\{\omega|\alpha(\omega)=\theta\}\right)=1.\end{equation}
\item The map $\theta\mapsto \mu_\theta(A)$ is $\calb$-measurable and~\eqref{eq:decomposition} is satisfied for every $A\in\calf$.
\end{enumerate}
We call $\left(\Theta,\calb,\lambda,\mu_\theta\right)$ the \emph{decomposition of $\mu$ induced by $\cald$}.
For every belief $\mu\in\Delta(\Omega)$, the trivial decomposition of $\mu$ is generated by the trivial sigma-algebra $\{\emptyset,\Omega\}$, the Dirac decomposition is generated by the sigma-algebra of all Borel subsets of $\Omega$. The ergodic decomposition  is induced by the $\sigma$-algebra $\cali$ of all \emph{invariant} Borel sets of $\Omega$, i.e. all Borel sets $S\subseteq\Omega$ such that $S=T^{-1}(S)$ where $T:\Omega\rightarrow \Omega$ is the left shift.

We will prove a more general theorem, which may be interesting in its own right. Let $T:A^\bbz\rightarrow A^\bbz$ be the left shift so that $T(\omega)_n = \omega_{n+1}$ for every $n\in\bbz$. A sigma-algebra $\cald$ of Borel subsets of $\Omega$ is \emph{shift-invariant} if $S\in \cald\leftrightarrow T(S)\in\cald$
for every Borel subset $S$ of $A^\bbz$.
\begin{theorem}\label{le:moregeneral}
Let $\mu$ be a stationary distribution over $\Omega$ and let $\cald$ be a shift invariant $\sigma$-algebra of subsets of $\Omega$ such that $\cald\subseteq \calf_{-\infty}^0$. Then the decomposition of $\mu$ induced by $\cald$ is weakly learnable.
\end{theorem}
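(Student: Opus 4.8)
The plan is to reduce ``weakly learnable'' to a single almost-sure Ces\`aro statement under $\mu$, rewrite the two predictors as conditional expectations with respect to a growing filtration, and then invoke Maker's Ergodic Theorem, which is tailor-made for the situation in which the conditioning $\sigma$-algebra and the time shift both grow with $n$.

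First I would reduce the statement. By the disintegration $\mu=\int_\Theta\mu_\theta\,\lambda(\textd\theta)$ and the support property~\eqref{eq:conditional}, the decomposition is weakly learnable if and only if for $\mu$-almost every $\omega=(\dots,a_{-1},a_0,a_1,\dots)$,
\[
  \bigl\|\mu(\zeta_n\in\cdot\mid a_0,\dots,a_{n-1})-\mu_{\alpha(\omega)}(\zeta_n\in\cdot\mid a_0,\dots,a_{n-1})\bigr\|\xrightarrow[n\to\infty]{\strc}0,
\]
where $\alpha\colon\Omega\to\Theta$ is the quotient map generating $\cald$; off a $\mu$-null set all conditioning histories have positive probability under both $\mu$ and $\mu_{\alpha(\omega)}$, so these predictive distributions are defined. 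Now let $P_n=\mu(\zeta_n\in\cdot\mid\calf_0^{n-1})$ and $Q_n=\mu(\zeta_n\in\cdot\mid\cald\vee\calf_0^{n-1})$ denote fixed measurable versions of these conditional distributions under $\mu$. Then $P_n(\omega)=\mu(\zeta_n\in\cdot\mid a_0,\dots,a_{n-1})$, and since $\cald\vee\calf_0^{n-1}$ is generated by $\alpha,\zeta_0,\dots,\zeta_{n-1}$ and $(\mu_\theta)$ is the disintegration of $\mu$ over $\cald$, conditioning on $\cald$ amounts to conditioning on $\theta=\alpha(\omega)$ with conditional law $\mu_\theta$, so $Q_n(\omega)=\mu_{\alpha(\omega)}(\zeta_n\in\cdot\mid a_0,\dots,a_{n-1})$ for $\mu$-a.e.\ $\omega$ (this uses the measurability clause in the definition of a decomposition). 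Hence it suffices to prove $\frac1N\sum_{n=0}^{N-1}\|P_n-Q_n\|\to0$ $\mu$-almost surely.

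The heart of the argument is the \emph{diagonal trick}. Since $\mu$ is $T$-invariant, $(T^n)^{-1}\calf_{-n}^{-1}=\calf_0^{n-1}$, and $(T^n)^{-1}\cald=\cald$ by shift-invariance, a change of variables gives $\|P_n-Q_n\|(\omega)=H_n(T^n\omega)$ with
\[
  H_n(\eta)=\max_{a\in A}\bigl|\mu(\zeta_0=a\mid\calf_{-n}^{-1})(\eta)-\mu(\zeta_0=a\mid\cald\vee\calf_{-n}^{-1})(\eta)\bigr|.
\]
The structural point is that $H_n\to0$ $\mu$-almost surely: the filtration $(\calf_{-n}^{-1})_n$ increases to $\calf_{-\infty}^{-1}$, and the filtration $(\cald\vee\calf_{-n}^{-1})_n$ increases to $\cald\vee\calf_{-\infty}^{-1}$, which equals $\calf_{-\infty}^{-1}$ precisely because the hypotheses $\cald\subseteq\calf_{-\infty}^{0}$ and $T(\cald)=\cald$ force $\cald=T(\cald)\subseteq T(\calf_{-\infty}^0)=\calf_{-\infty}^{-1}$. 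By L\'evy's upward martingale convergence theorem both conditional expectations of $\mathbf 1_{\{\zeta_0=a\}}$ then converge $\mu$-a.s.\ to the common limit $\mu(\zeta_0=a\mid\calf_{-\infty}^{-1})$, so $H_n\to0$.

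Finally, $0\le H_n\le1$ and $H_n\to0$ $\mu$-a.s., so Maker's Ergodic Theorem applied to $(\Omega,\mu,T)$ yields $\frac1N\sum_{n=0}^{N-1}H_n(T^n\omega)\to\mathbb E_\mu[\lim_nH_n\mid\cali]=0$ for $\mu$-a.e.\ $\omega$, which is exactly the strong Ces\`aro convergence required. I expect the real work to lie not in the limit theorems (L\'evy and Maker function here as black boxes) but in the reduction step: one must verify carefully that $Q_n$ is genuinely the one-step predictor of an agent who knows the parameter $\theta$, which rests on the measurability built into the definition of a decomposition and on $(\mu_\theta)$ being the disintegration of $\mu$ over $\cald$. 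The one essential idea is to recognize that the ``moving target'' — conditioning $\sigma$-algebra and time shift both indexed by $n$ — is the natural habitat of Maker's theorem, and that the hypothesis $\cald\subseteq\calf_{-\infty}^0$ (together with shift-invariance) is used exactly to make $\cald$ asymptotically invisible inside the growing past, so that the agent's predictor and the informed predictor merge in Ces\`aro average.
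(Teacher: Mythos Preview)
Your proof is correct and follows essentially the same approach as the paper: reduce to a $\mu$-almost sure Ces\`aro statement, use stationarity and shift-invariance of $\cald$ to rewrite $\|P_n-Q_n\|$ as $H_n\circ T^n$, apply L\'evy's martingale convergence to get $H_n\to 0$, and finish with Maker's ergodic theorem. The only organizational difference is that the paper factors out a lemma (not requiring $\cald\subseteq\calf_{-\infty}^0$) comparing $\mu(\zeta_n=\cdot\mid\calf_0^n\vee\cald)$ with $\mu(\zeta_n=\cdot\mid\calf_{-\infty}^n\vee\cald)$, applies it twice (once with $\cald$ and once with $\cald$ trivial), and then uses the triangle inequality, whereas you compare the two predictors directly by observing that both backward filtrations increase to the same limit $\calf_{-\infty}^{-1}$; the underlying mechanism is identical.
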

Theorem~\ref{th:thetheorem} follows immediately from Theorem~\ref{le:moregeneral} since the sigma-algebra of invariant sets $\cali$ which induces the ergodic decomposition satisfies the assumption of the Theorem~\ref{le:moregeneral}.
We will prove Theorem~\ref{le:moregeneral} using Lemma~\ref{le:thelemma}
\begin{lemma}\label{le:thelemma}
Let $\mu$ be a stationary distribution over $A^\bbz$ and let $\cald$ be a shift invariant $\sigma$-algebra of Borel subsets of $A^\bbz$.  Then
\begin{equation}\label{thelemma-equation}
	\|\mu(\zeta_n=\cdot |
	\calf_0^n\vee\cald)-\mu(\zeta_n=\cdot |\calf_{-\infty}^n\vee\cald)\|
		\xrightarrow[n\rightarrow\infty]{\strc}0~\mu\text{-a.s}.
\end{equation}
\end{lemma}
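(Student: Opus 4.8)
The plan is to unwind the definition of strong Ces\`aro convergence, transport the whole expression back to time $0$ using the shift, and then recognise the resulting orbit average as an instance of Maker's ergodic theorem. Write $\calf_0^n=\sigma(\zeta_0,\dots,\zeta_{n-1})$ for the $\sigma$-algebra of the first $n$ observations, and set
\[
p_n=\mu(\zeta_n=\cdot\,|\,\calf_0^n\vee\cald),\qquad q_n=\mu(\zeta_n=\cdot\,|\,\calf_{-\infty}^n\vee\cald),
\]
regarded as $\Delta(A)$-valued random variables. By the definition of $\xrightarrow{\strc}$, statement~\eqref{thelemma-equation} is the assertion that $\frac1N\sum_{n=0}^{N-1}\|p_n-q_n\|\to0$ for $\mu$-almost every $\omega$. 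Let $T$ be the left shift and, for $m\in\bbn\cup\{\infty\}$, put
\[
r_m=\mu(\zeta_0=\cdot\,|\,\calf_{-m}^0\vee\cald),\qquad \calf_{-m}^0:=\sigma(\zeta_{-m},\dots,\zeta_{-1}),
\]
with $\calf_{-\infty}^0=\sigma(\zeta_m:m<0)$. Since $\mu$ is stationary and $\cald$ is shift-invariant, one has the $\sigma$-algebra identities $T^{-n}(\calf_{-n}^0\vee\cald)=\calf_0^n\vee\cald$ and $T^{-n}(\calf_{-\infty}^0\vee\cald)=\calf_{-\infty}^n\vee\cald$; the equivariance of conditional probabilities under the measure-preserving map $T^n$ (applied to the events $\{\zeta_0=a\}$, $a\in A$) then gives $p_n(\omega)=r_n(T^n\omega)$ and $q_n(\omega)=r_\infty(T^n\omega)$ for $\mu$-a.e.\ $\omega$. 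Hence $\|p_n(\omega)-q_n(\omega)\|=F_n(T^n\omega)$, where $F_n:=\|r_n-r_\infty\|:\Omega\to[0,1]$.

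Next, the $\sigma$-algebras $\calf_{-n}^0\vee\cald$ increase to $\calf_{-\infty}^0\vee\cald$ as $n\to\infty$, so by L\'evy's upward martingale convergence theorem (applied to the finitely many indicators $\mathbf{1}_{\{\zeta_0=a\}}$) one has $r_n\to r_\infty$ $\mu$-a.s., whence $F_n\to0$ $\mu$-a.s. Since also $0\le F_n\le1$, the constant function $1$ is an integrable envelope for the sequence $(F_n)$. Maker's ergodic theorem, applied to the measure-preserving system $(\Omega,\calf,\mu,T)$ and the sequence $(F_n)$, then yields
\[
\frac1N\sum_{n=0}^{N-1}F_n(T^n\omega)\;\longrightarrow\;\mathbb{E}_\mu\bigl[\lim_{n}F_n \mid \cali\bigr]=0\qquad\mu\text{-a.s.},
\]
where $\cali$ is the invariant $\sigma$-algebra. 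The left-hand side is $\frac1N\sum_{n=0}^{N-1}\|p_n-q_n\|$, which is precisely the strong Ces\`aro convergence claimed. (Only shift-invariance of $\cald$ is used; the extra hypothesis $\cald\subseteq\calf_{-\infty}^0$ of Theorem~\ref{le:moregeneral} plays no role in the lemma.)

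The standard ingredients --- the equivariance of conditional expectations under the shift, and the martingale convergence $r_n\to r_\infty$ --- are routine; the point of the argument is the observation that the gap $\|p_n-q_n\|$ is the value at $T^n\omega$ of a function $F_n$ which \emph{already} converges to $0$ almost surely, so that its Ces\`aro orbit average must vanish. I expect the only genuine obstacle to be the invocation of Maker's ergodic theorem, which is exactly the device that turns ``$F_n\to0$ a.s.\ together with an integrable envelope'' into almost-sure convergence of $\frac1N\sum_{n<N}F_n\circ T^n$: without it, a chain-rule computation $\mathbb{E}_\mu\sum_{n<N}D(q_n\,\|\,p_n)=I(\calf_{-\infty}^0;\zeta_0,\dots,\zeta_{N-1}\,|\,\cald)$ combined with Pinsker's inequality yields only the $L^1$ (hence in-probability) form of the conclusion, which is strictly weaker than the almost-sure statement required.
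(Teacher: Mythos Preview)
Your argument is correct and is essentially identical to the paper's proof: your $r_m$, $r_\infty$, and $F_n$ are the paper's $f_n$, $f_\infty$, and $g_n$, and both proofs proceed by the shift identity $\|p_n-q_n\|=F_n\circ T^n$, the martingale convergence $F_n\to 0$, and Maker's ergodic theorem. Your side remarks (that $\cald\subseteq\calf_{-\infty}^0$ is not used here, and that without Maker one only gets $L^1$ convergence) are accurate and do not appear in the paper.
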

Consider the case in which $\cald=\{\emptyset, \Omega\}$ is trivial. Then Lemma~\ref{le:thelemma} says that a Bayesian agent who observes a stationary process from time $n=0$ onwards will  make predictions in the long run as if he knew the infinite history of the process.
\begin{proof}[Proof of Lemma~\ref{le:thelemma}]
For every $n\ge 0$ let $f_n:\Omega\rightarrow\Delta(A)$ be a version of the conditional distribution of $\zeta_0$ according to $\mu$ given the finite history $\zeta_{-1} , \dots,\zeta_{-n}$ and $\cald$:
	$$f_n=\mu(\zeta_0=\cdot |\calf_{-n}^0\vee\cald),$$
and let $f_{\infty}:\Omega\rightarrow\Delta(A)$ be a version of the conditional distribution of $\zeta_0$ according to $\mu$ given the infinite history $\zeta_{-1},\dots$ and $\cald$: 
	$$f_{\infty}=\mu(\zeta_0=\cdot | \calf_{-\infty}^0\vee\cald).$$
Let $g_n=\|f_n-f_{\infty}\|$. By the martingale convergence theorem $\lim_{n\rightarrow\infty}f_n=f_{\infty}~\mu\text{-a.s}$ and therefore
\begin{equation}\label{martingale}
\lim_{n\rightarrow\infty}g_n=0~\mu\text{-a.s}
\end{equation}
It follows from the stationarity of $\mu$ and the fact that $\cald$ is shift invariant that 
\begin{equation}\label{shift}
\|\mu\left(\zeta_n=\cdot|\calf_0^n\vee\cald\right)-\mu\left(\zeta_n=\cdot|\calf_{-\infty}^n\vee\cald\right)\|=\|f_n\circ T^n-f_{\infty}\circ T^n\|=g_n\circ T^n~\mu\text{-a.s}\end{equation}
Therefore
	$$\frac{1}{N}\sum_{n=0}^{N-1}\|\mu
		\left(\zeta_n=\cdot|\calf_0^n\vee\cald\right)
			-\mu\left(\zeta_n=\cdot|\calf_{-\infty}^n\vee\cald\right)\|
		=\frac{1}{N}\sum_{n=0}^{N-1}g_n\circ T^n
			\xrightarrow[N\rightarrow\infty]{} 0~\mu\text{-a.s}$$
where the equality follows from~(\ref{shift}) and the limit follows from~\eqref{martingale} and Maker's generalization of the ergodic theorem~\cite[Corollary 10.8]{Kallenberg:02} to cover multiple functions simultaneously:
\begin{makerstheorem} Let $\mu\in \Delta(\Omega)$ be such that $T\mu=\mu$ and let $g_0,g_1,\dots:\Omega\rightarrow \bfr$ be such that $\sup_n |g_n|\in L^1(\mu)$ and $g_n\rightarrow g_\infty\quad\mu-a.s$. Then \[\frac{1}{N}\sum_{n=0}^{N-1}g_n\cdot T^n\xrightarrow[N\rightarrow\infty]{} E(g_\infty|\cali)\quad\mu-a.s.\]\end{makerstheorem} 
\end{proof}

\begin{proof}[Proof of Theorem~\ref{le:moregeneral}]
From $\cald\subseteq \calf_{-\infty}^0$ it follows that $\calf_{-\infty}^n\vee\cald=\calf_{-\infty}^n$.
Therefore, from Lemma~\ref{le:thelemma} we get that
	$$\|\mu(\zeta_n=\cdot |\calf_0^n\vee\cald)-\mu(\zeta_n=\cdot
		|\calf_{-\infty}^n)\|\xrightarrow[n\rightarrow\infty]
			{\strc}0~\mu\text{-a.s}.$$
By the same lemma (with $\cald$ trivial)
	$$\|\mu(\zeta_n=\cdot |\calf_0^n)-\mu(\zeta_n=
	\cdot|\calf_{-\infty}^n)\|\xrightarrow[n\rightarrow\infty]{\strc}0~\mu\text{-a.s}.$$
By the last two limits and the triangular inequality
\begin{multline}\label{induced-learnable}
	\|\mu(\zeta_n=\cdot |
	\calf_0^n)-\mu(\zeta_n=\cdot |
	\calf_0^n\vee\cald)\|\leq\\ \|\mu(\zeta_n=\cdot |
	\calf_0^n)-\mu(\zeta_n=\cdot
	|\calf_{-\infty}^n)\|+ \|\mu(\zeta_n=\cdot |
	\calf_0^n\vee\cald)-\mu(\zeta_n=\cdot
	|\calf_{-\infty}^n)\|\xrightarrow[n\rightarrow\infty]{\strc}0~\mu\text{-a.s}\end{multline}
Let $(\Theta,\calb,\lambda)$ be the quotient of $(\Omega,\calf,\mu)$ over $\cald$ and let $\left(\mu_\theta\right)$ be the corresponding conditional distributions. Let $S$ be the set of all realizations $\omega =(\dots,a_{-1},a_0,a_1,\dots)$ such that 
	$$\|\mu(\zeta_{n}=\cdot|a_{n-1},\dots,a_{0})-\mu_\omega \left(\zeta_{n}
			=\cdot|a_{n-1},\dots,a_{0}\right)\|\\ 
				\xrightarrow[n\rightarrow\infty]{\strc}0.$$
Then
$\mu(S)=1$ by~(\ref{induced-learnable}).  But
$\mu(S)=\int \mu_\theta(S) \lambda(\textd \theta)$. It follows that
$\mu_\theta(S)=1$ for $\lambda$-almost every $\theta$, a desired.
\end{proof}
\section{Ergodicity and mixing}
Mixing conditions formalize the intuition that observing a sequence of outcomes of a process does not change one's belief about events in the far future. Standard examples of mixing processes are i.i.d.\ processes and non-periodic markov processes. In this section we recall a mixing condition that was called ``sufficiency for prediction'' in JKS, show that the ergodic decomposition is not necessarily sufficient for prediction and show that a finer decomposition than the ergodic decomposition is sufficient for prediction and also weakly learnable.

Let $\overrightarrow\T=\bigwedge_{m\geq 0}\calf_{m}^\infty$ be the \emph{future tail} sigma-algebra where $\calf_m^\infty$ the $\sigma$-algebra of $\Omega$ that is generated by $(\zeta_m,\zeta_{m+1},\dots)$. A probability distribution (not necessarily stationary) $\nu\in\Delta(\Omega)$ is \emph{mixing} if it is $\overrightarrow\T$-trivial, i.e., if $\nu(B)\in\{0,1\}$ for every $B\in\overrightarrow\T$.\footnote{ An equivalent way to write this condition is that 
 for every $n$ and $\epsilon$, there is $m$ such that 
 		$$\left|\nu(B|a_0,\dots,a_{n-1})-\nu(B)\right|<\epsilon$$
 for every $B\in\calf_m^\infty$ and partial history $(a_0,\dots,a_{n-1})\in A^n$. JKS call such belief 
  {\em sufficient for prediction}. They establish the equivalence with the mixing condition in their proof of their Theorem 1}
If we want the components of the decomposition to be mixing we need a finer decomposition than the ergodic decomposition. This decomposition is the decomposition that is induced by the tail $\overrightarrow\T$ as shown in the following proposition.
\begin{proposition}Let $\left(\Theta,\calb,\lambda,\left(\mu_{\theta}\right)\right)$ be the decomposition of a belief $\mu\in\Delta(\Omega)$ that is induced by the tail $\overrightarrow\T$. Then $\mu_\theta$ is mixing for $\lambda$-almost every $\theta$.\end{proposition}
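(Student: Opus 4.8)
What must be shown is that for $\lambda$-almost every $\theta$ the measure $\mu_\theta$ is $\overrightarrow{\T}$-trivial; in other words, this is the general principle that conditioning on a sub-$\sigma$-algebra renders that $\sigma$-algebra essentially deterministic under the resulting conditional measures. The plan is to prove it by exploiting the structure $\overrightarrow{\T}=\bigwedge_{m\ge 0}\calf_m^\infty$ as a decreasing intersection of \emph{countably generated} $\sigma$-algebras, together with reverse martingale convergence and the elementary observation that, since $\mu=\int\mu_\theta\,\lambda(\textd\theta)$, a $\mu$-null set is $\mu_\theta$-null for $\lambda$-almost every $\theta$ (so countably many $\mu$-almost-sure statements hold $\mu_\theta$-a.s.\ for $\lambda$-a.e.\ $\theta$).

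I would first fix a countable algebra $\mathcal{U}$ of cylinder sets generating $\calf$ and let $\alpha:\Omega\to\Theta$ be the quotient map inducing the decomposition, so that $\omega\mapsto\mu_{\alpha(\omega)}(E)$ is a version of $\mathbb{E}_\mu[\mathbf{1}_E\mid\overrightarrow{\T}]$ for each $E\in\calf$. For each $m$ pick a Borel map $\alpha_m:\Omega\to\Theta_m$ onto a standard space with $\sigma(\alpha_m)=\calf_m^\infty$ (possible since $\calf_m^\infty$ is genuinely countably generated) and conditional distributions $(\mu^{(m)}_\eta)$ of $\mu$ given $\calf_m^\infty$, normalized so that $\mu^{(m)}_\eta(\alpha_m^{-1}\{\eta\})=1$ for every $\eta$. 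Then $\omega\mapsto\mu^{(m)}_{\alpha_m(\omega)}(E)$ is a version of $\mathbb{E}_\mu[\mathbf{1}_E\mid\calf_m^\infty]$, and by reverse martingale convergence it tends $\mu$-a.s., for every $E\in\mathcal{U}$, to $\mu_{\alpha(\omega)}(E)$ as $m\to\infty$. The next step is a tower property for disintegrations: because $\overrightarrow{\T}\subseteq\calf_m^\infty$, the \emph{same} family $(\mu^{(m)}_\eta)$ is, for $\lambda$-a.e.\ $\theta$, also a version of the conditional distributions of $\mu_\theta$ given $\calf_m^\infty$ --- one verifies this by checking that $\theta\mapsto\int\mu^{(m)}_\eta\,\lambda_{m,\theta}(\textd\eta)$ disintegrates $\mu$ over $\overrightarrow{\T}$, where $\lambda_{m,\theta}$ is the conditional law of $(\alpha_m)_*\mu$ given the factor map $\beta_m$ with $\alpha=\beta_m\circ\alpha_m$, hence equals $\mu_\theta$ for $\lambda$-a.e.\ $\theta$. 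Consequently $\mathbb{E}_{\mu_\theta}[\mathbf{1}_E\mid\calf_m^\infty]=\mu^{(m)}_{\alpha_m(\cdot)}(E)$ $\mu_\theta$-a.s.; letting $m\to\infty$, using reverse martingale convergence under $\mu_\theta$, transferring the $\mu$-a.s.\ limit to $\mu_\theta$-a.s., and using $\mu_{\alpha(\omega)}=\mu_\theta$ for $\mu_\theta$-a.e.\ $\omega$, I get $\mathbb{E}_{\mu_\theta}[\mathbf{1}_E\mid\overrightarrow{\T}]=\mu_\theta(E)$ $\mu_\theta$-a.s.\ for every $E\in\mathcal{U}$ and $\lambda$-a.e.\ $\theta$. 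Finally, the class of $B\in\calf$ with $\mathbb{E}_{\mu_\theta}[\mathbf{1}_B\mid\overrightarrow{\T}]=\mu_\theta(B)$ $\mu_\theta$-a.s.\ is a $\lambda$-system containing the $\pi$-system $\mathcal{U}$, hence is all of $\calf$; for $B\in\overrightarrow{\T}$ the left side equals $\mathbf{1}_B$ $\mu_\theta$-a.s., which forces $\mu_\theta(B)\in\{0,1\}$. So $\mu_\theta$ is mixing for $\lambda$-a.e.\ $\theta$.

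The reason this is not a one-liner, and the step I expect to be the main obstacle, is the passage from ``$\mu_\theta(B)\in\{0,1\}$ for $\lambda$-a.e.\ $\theta$'', which is immediate for each \emph{fixed} $B\in\overrightarrow{\T}$, to a single $\lambda$-conull set of $\theta$'s that works simultaneously for \emph{all} $B\in\overrightarrow{\T}$: this cannot be arranged by a countable union of null sets, because $\overrightarrow{\T}$ is not countably generated as a genuine $\sigma$-algebra and the $\mu_\theta$ are typically mutually singular, so ``$\mu$-null'' does not imply ``$\mu_\theta$-null''. The approximation of $\overrightarrow{\T}$ by the countably generated $\calf_m^\infty$, through reverse martingales, is exactly what circumvents this by reducing the problem to the countable family $\mathcal{U}$ and a single $\mu$-null exceptional set; within that reduction the most delicate bookkeeping is the tower property for disintegrations, specifically that one properly normalized family $(\mu^{(m)}_\eta)$ can serve as the conditional distributions of $\mu$ and of $\lambda$-almost every $\mu_\theta$ at the same time.
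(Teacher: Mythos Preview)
Your argument is correct and, in fact, gives considerably more than the paper does. The paper's ``proof'' is simply a citation: it invokes Berti and Rigo~\cite[Theorem 15]{BertiRigo:07} for the fact that conditional distributions over the tail are almost surely tail-trivial, and then explicitly flags the obstacle you also flag---that $\overrightarrow\T$ is not countably generated, so one cannot simply quantify over a countable generating family of tail sets and take a union of null sets. The paper makes no attempt to prove the cited fact; it only warns that JKS took it for granted.

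What you have supplied is essentially a self-contained proof of the Berti--Rigo statement in the specific situation at hand, exploiting the structure $\overrightarrow\T=\bigcap_m\calf_m^\infty$ as a decreasing intersection of countably generated $\sigma$-algebras. The two key ingredients---the tower property that one properly normalized family $(\mu^{(m)}_\eta)$ serves simultaneously as the $\calf_m^\infty$-disintegration of $\mu$ and of $\lambda$-almost every $\mu_\theta$, and the transfer of the reverse-martingale limit from $\mu$-a.s.\ to $\mu_\theta$-a.s.\ via the mixture representation over the countable family $\mathcal U$---are correctly identified and correctly handled. The final $\pi$--$\lambda$ step is clean: once $\mathbb E_{\mu_\theta}[\mathbf 1_E\mid\overrightarrow\T]=\mu_\theta(E)$ holds $\mu_\theta$-a.s.\ for every $E$ in the generating algebra $\mathcal U$, it extends to all of $\calf$ for that fixed $\theta$, and specializing to $B\in\overrightarrow\T$ gives $\mu_\theta(B)\in\{0,1\}$.

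So your route and the paper's are not really different arguments; rather, you have written out the content of the reference the paper leans on. The paper buys brevity at the cost of a black box; your version buys self-containment and makes transparent exactly where the decreasing countably-generated approximation does the work that a direct attack on $\overrightarrow\T$ cannot.
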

\begin{proof}This proposition is JKS' Theorem 1. We repeat the argument here to clarify a gap in their proof. 

The proposition follows from the fact that the conditional distributions of every probability distribution $\mu\in\Delta(\Omega)$ over the tail are almost surely tail-trivial (i.e., mixing). This fact was recently proved by Berti and Rigo~\cite[Theorem 15]{BertiRigo:07}\footnote{ It is taken for granted in the first sentence of JKS's proof of Their Theorem 1}. We note that it is not  true for every sigma-algebra $\cald$ that the conditional distributions of $\mu$ over $\cald$ are almost surely $\cald$-trivial. This property is very intuitive (and indeed, easy to prove) when $\cald$ is generated by a finite partition, or more generally when $\cald$ is countably generated, but the tail is not countably generated, which is why Berti and Rigo's result is required.
\end{proof}

The next theorem uses Lemma~\ref{le:thelemma} to show that the tail decomposition is also weakly learnable. In particular, Theorem~\ref{th:alsojks} implies that the ergodic decomposition does not capture all the learnable properties of a stationary process.
\begin{theorem}\label{th:alsojks}The tail decomposition of a stationary stochastic process is weakly learnable.\end{theorem}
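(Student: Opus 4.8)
The plan is to mimic the structure of the proof of Theorem~\ref{le:moregeneral}, but with the future tail $\overrightarrow\T$ playing the role of $\cald$. The obstacle is that $\overrightarrow\T$ is \emph{not} contained in $\calf_{-\infty}^0$, so the slick identity $\calf_{-\infty}^n\vee\cald=\calf_{-\infty}^n$ that drove the earlier argument is unavailable; indeed the tail is genuinely ``about the future'' and cannot be absorbed into the observed past. So the triangle-inequality step must be rearranged. The key point is that Lemma~\ref{le:thelemma} is stated for an \emph{arbitrary} shift-invariant $\sigma$-algebra $\cald$ of Borel subsets of $A^\bbz$, and $\overrightarrow\T$ \emph{is} shift invariant (since $T^{-1}\calf_m^\infty=\calf_{m+1}^\infty$, so $\bigwedge_m\calf_m^\infty$ is preserved by $T$). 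Hence Lemma~\ref{le:thelemma} applies directly with $\cald=\overrightarrow\T$, giving
\[
\|\mu(\zeta_n=\cdot\mid\calf_0^n\vee\overrightarrow\T)-\mu(\zeta_n=\cdot\mid\calf_{-\infty}^n\vee\overrightarrow\T)\|\xrightarrow[n\to\infty]{\strc}0\quad\mu\text{-a.s.},
\]
and also, with $\cald$ trivial,
\[
\|\mu(\zeta_n=\cdot\mid\calf_0^n)-\mu(\zeta_n=\cdot\mid\calf_{-\infty}^n)\|\xrightarrow[n\to\infty]{\strc}0\quad\mu\text{-a.s.}
\]

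The real work is to connect the two ``infinite-past'' conditionings, i.e.\ to show
\[
\|\mu(\zeta_n=\cdot\mid\calf_{-\infty}^n)-\mu(\zeta_n=\cdot\mid\calf_{-\infty}^n\vee\overrightarrow\T)\|\xrightarrow[n\to\infty]{\strc}0\quad\mu\text{-a.s.}
\]
Here is where I expect the main obstacle, and where I would use the extra structure of the tail. Observe that $\zeta_n$ together with $\calf_{-\infty}^n$ generates $\calf_{-\infty}^{n+1}\supseteq\calf_{n+1}^\infty\supseteq\overrightarrow\T$ only in the limit; more usefully, conditionally on $\calf_{-\infty}^n$ the extra information in $\overrightarrow\T$ is ``deep future'' information. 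I would argue that $\mu(\zeta_n=\cdot\mid\calf_{-\infty}^n\vee\overrightarrow\T)=\mu(\zeta_n=\cdot\mid\calf_{-\infty}^n\vee\calf_{n+1}^\infty)$ up to the Cesàro-negligible set, since $\overrightarrow\T\subseteq\calf_{n+1}^\infty$ and the \emph{additional} content of $\calf_{n+1}^\infty$ beyond $\overrightarrow\T$, conditioned on the whole past $\calf_{-\infty}^n$, should wash out. Cleanest is a reverse-martingale argument: fixing the realization, $\mu(\zeta_n=\cdot\mid\calf_{-\infty}^n\vee\calf_{n+m}^\infty)\to\mu(\zeta_n=\cdot\mid\calf_{-\infty}^n\vee\overrightarrow\T)$ as $m\to\infty$ by the backward martingale convergence theorem applied along the decreasing family $(\calf_{-\infty}^n\vee\calf_{n+m}^\infty)_{m\ge 1}$ whose intersection is $\calf_{-\infty}^n\vee\overrightarrow\T$ (one should check this intersection identity, which holds because $\calf_{-\infty}^n$ is fixed). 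Combined with the fact that $\mu(\zeta_n=\cdot\mid\calf_{-\infty}^n\vee\calf_{n+m}^\infty)\to\mu(\zeta_n=\cdot\mid\calf_{-\infty}^n)$ in a suitable averaged sense as $n\to\infty$ by a Maker-type argument — since conditioning on the far future $\calf_{n+m}^\infty$ barely perturbs a conditional law on coordinate $n$ once $m$ is large, uniformly along a density-one set of $n$ — one gets the desired Cesàro estimate. In fact the whole of this middle step is precisely the content one extracts by applying Lemma~\ref{le:thelemma} once more, this time with the roles of past and future reversed (using a time-reversal of the stationary process, which is again stationary); that reformulation is probably the shortest route and I would pursue it first.

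Granting the three Cesàro estimates, the triangle inequality gives
\[
\|\mu(\zeta_n=\cdot\mid\calf_0^n)-\mu(\zeta_n=\cdot\mid\calf_0^n\vee\overrightarrow\T)\|\xrightarrow[n\to\infty]{\strc}0\quad\mu\text{-a.s.},
\]
exactly as in~\eqref{induced-learnable}. Finally I would transfer this statement from the measure $\mu$ to $\lambda$-almost every conditional measure $\mu_\theta$ of the tail decomposition by the identical integration argument used at the end of the proof of Theorem~\ref{le:moregeneral}: let $S$ be the set of realizations along which the displayed strong-Cesàro convergence holds with $\mu_\omega$ in place of the posterior over $\overrightarrow\T$; then $\mu(S)=1$, and since $\mu(S)=\int\mu_\theta(S)\,\lambda(\textd\theta)$ with $\mu_\theta(S)\le 1$, we get $\mu_\theta(S)=1$ for $\lambda$-a.e.\ $\theta$, which is precisely weak learnability of the tail decomposition. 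The one delicate point to verify is that $\mu(\zeta_n=\cdot\mid\calf_0^n\vee\overrightarrow\T)$ indeed coincides $\mu_\theta$-a.s.\ with $\mu_\theta(\zeta_n=\cdot\mid\calf_0^n)$ — this is where the defining property~\eqref{eq:conditional} of the tail decomposition, $\mu_\theta(\alpha(\omega)=\theta)=1$, is used, since $\alpha$ generates $\overrightarrow\T$, so conditioning on $\overrightarrow\T$ is the same as knowing $\theta$.
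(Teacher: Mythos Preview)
Your setup is fine up to and including the two applications of Lemma~\ref{le:thelemma} (with $\cald=\overrightarrow\T$ and $\cald$ trivial) and the final integration step. The genuine gap is the ``middle step''
\[
\bigl\|\mu(\zeta_n=\cdot\mid\calf_{-\infty}^n)-\mu(\zeta_n=\cdot\mid\calf_{-\infty}^n\vee\overrightarrow\T)\bigr\|\xrightarrow[n\to\infty]{\strc}0\quad\mu\text{-a.s.},
\]
which you have not actually proved. Your reverse-martingale sketch needs both the intersection identity $\bigcap_{m}\bigl(\calf_{-\infty}^n\vee\calf_{n+m}^\infty\bigr)=\calf_{-\infty}^n\vee\overrightarrow\T$ (joins and intersections of $\sigma$-algebras do not commute in general, so this requires an argument) and a Maker-type estimate that is uniform in $m$; the latter you only assert. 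The time-reversal idea does not rescue the step either: applying Lemma~\ref{le:thelemma} or Theorem~\ref{le:moregeneral} to the reversed process yields a statement about \emph{backward} predictions of the original process, not forward ones, so you still need a bridge between forward and backward conditioning. There is in fact a one-line diagnostic that your argument cannot close: nothing in it uses the finiteness of $A$, yet Example~\ref{ex:infinite} shows the theorem is false for infinite $A$ (there $\overrightarrow\T=\calf$, so $\mu(\zeta_n=\cdot\mid\calf_{-\infty}^n\vee\overrightarrow\T)=\delta_{\zeta_n}$ while $\mu(\zeta_n=\cdot\mid\calf_{-\infty}^n)$ is a nondegenerate fair-coin law on the fresh coordinate, and your middle step fails outright).

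The paper's proof is different and much shorter: the \emph{past} tail $\overleftarrow\T=\bigcap_{m\le 0}\calf_{-\infty}^m$ is shift invariant and satisfies $\overleftarrow\T\subseteq\calf_{-\infty}^0$, so Theorem~\ref{le:moregeneral} applies verbatim and the decomposition induced by $\overleftarrow\T$ is weakly learnable. The remaining step is the nontrivial ergodic-theoretic fact (Pinsker algebra / entropy) that for a stationary process over a \emph{finite} alphabet the $\mu$-completions of the past and future tails coincide, $\overleftarrow\T_\mu=\overrightarrow\T_\mu$; hence the two induced decompositions are identical. This is exactly the place where finiteness of $A$ enters, and it is precisely the ingredient your middle step is trying to reinvent. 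If you want to salvage your route, the honest fix is to invoke $\overrightarrow\T_\mu\subseteq\overleftarrow\T_\mu\subseteq(\calf_{-\infty}^n)_\mu$, which makes the middle term identically zero and collapses your triangle inequality to the paper's argument.
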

\begin{proof}From Lemma~\ref{le:thelemma} it follows that the decomposition induced by the past tail $\overleftarrow\T$ is learnable, since the past tail is shift invariant. 

The theorem now follows from the fact that for every stationary belief $\mu$ over a finite set of outcomes it holds that $\overleftarrow\T_\mu=\overrightarrow\T_\mu$ where $\overleftarrow\T_\mu$ and $\overrightarrow\T_\mu$ are the completions of the past and future tails under $\mu$. See Weiss \cite[Section 7]{Weiss:00}. Therefore, the decomposition of $\mu$ induced by $\overrightarrow\T$ equals the decomposition induced by $\overleftarrow\T$, which is learnable. We note that the equality of the past and future tails of a stationary process is not trivial, it relies on finiteness of the set of outcomes $A$, and the proof relies on the notion of entropy. 
\end{proof}


We conclude with further comments on the relationship with \cite{JacksonKalaiSmorodinsky:99}. Their  main result characterizes the class of distributions that admit a decomposition which is both learnable and sufficient for prediction. They dub these processes ``asymptotically reverse mixing.'' In particular, they prove that, for every such process $\mu$, the decomposition of $\mu$ induced by the future tail is learnable and sufficient to prediction. In our Example~\ref{ex:war}, the tail decomposition equals the ergodic decomposition, and, as we have shown, is not learnable. This shows that stationary processes needs not be asymptotic reverse mixing.
On the other hand, the class of asymptotically reverse mixing processes contains non-stationary processes. For example, the Dirac atomic measure $\delta_\omega$ is asymptotically reverse mixing for every realization $\omega\in\Delta(\Omega)$.

\section{Extensions}
In this section we discuss to what extent the theorems and tools of this paper extend to a larger class of process. In the process, this sheds further light on the assumptions made in our work.

\subsection{Infinite set of outcomes}
The definitions of merging and weak merging can be extended to the case in which the outcome set $A$ is a compact metric space\footnote{ Also for the case that $A$ is a separable metric space, but then there are several possible non-equivalent definitions~\cite{DAristotileDiaconisFreedman:88}}: Let $\phi$ be the Prohorov Metric over $\Delta(A)$. Say that  
the belief $\tilde\mu\in\Delta(A^\bbn)$ \emph{merges} to $\mu\in\Delta(A^\bbn)$ if 
	$$\phi\left(\mu(\cdot|a_0,\dots,a_{n-1}),\mu(\cdot|a_0,\dots,a_{n-1})\right)			\xrightarrow[n\rightarrow\infty]{\ }0$$
for  $\mu$-almost every realization $\omega=(a_0,a_1,\dots)\in A^\bbn$ and that $\tilde \mu$ \emph{weakly merges} to $\mu$ if the limit holds in strong Cesaro sense. Theorem~\ref{th:thetheorem} extends to the case of an infinite set $A$ of outcomes. However, Theorem~\ref{th:alsojks} does not hold in this case. We used the finiteness in the proof when we appealed to the equality of the past and future tails of the process. The following example shows the problem where $A$ is infinite:
\begin{example}\label{ex:infinite}Let $A=\{0,1\}^\bbn$ equipped with the standard Borel structure. Thus an element $a\in A$ is given by $a=\left(a[0],a[1],\dots\right)$ where $a[k]\in\{0,1\}$ for every $k\in\bbn$. Let $\mu$ be the belief over $A^\bbz$ such that $\{\zeta_n[0]\}_{n\in\bbz}$ are i.i.d.\ fair coin tosses and $\zeta_n[k]=\zeta_{n-k}[0]$ for every $k\geq 1$. Note that in this case $\overrightarrow\T=\calb$ (so the future tail contains the entire history of the process) while $\overleftarrow\T=\calr$ (the past tail is empty). The tail decomposition in this case will be the Dirac decomposition. However, this decomposition is not learnable: an agent who predict according to $\mu$ will at every period $n$ will be completely in the dark about $\zeta_{n+1}[0]$.\end{example}
\subsection{Relaxing stationarity}
As we have argued earlier, stationary beliefs are useful to model situations where  there is nothing remarkable about the point in time in which the agent started to keep track of the processes (so other agents who start observing the process at different times have the same beliefs) and that the agent is a passive observer who has no impact on the process itself. The first assumption is rather strong, and can be somewhat relaxed. In particular, consider a belief that is the posterior of some stationary prior conditioned on the occurrence of some event. 
(A similar situation is an agent who observes a finite state markov process that starts at a given state rather than the stationary distribution.)  Let us say that a belief $\nu\in A^\bbn$ is \emph{conditionally stationary} if there exists some stationary belief $\mu$ such that $\nu=\mu(\cdot |B)$ for some Borel subset $B$ of $A^\bbn$ with $\mu(B)>0$. While such processes are not stationary, they still admits an ergodic decomposition. they exhibit the same tail behavior of stationary processes. In particular, our theorems extend to such processes. The obvious details are omitted.

\bibliographystyle{plain}
\bibliography{references}

\end{document}